\newtheorem{theorem}{Theorem}[section]
\newtheorem{lemma}[theorem]{Lemma}
\newtheorem{corollary}[theorem]{Corollary}
\newtheorem{claim}[theorem]{Claim}
\newtheorem{question}[theorem]{Question}
\newcommand{\fami}{\mathcal{F}}  	
\title{
Improved bounds on the chromatic numbers of the square of Kneser graphs}
\author{
\begin{tabular}{c}
{\sc Seog-Jin KIM\thanks{This work was supported by the National Research Foundation of Korea(NRF) grant funded by the Korea government (MEST) (No. 2011-0009729).}} \\
[1ex]
{\small
Department of Mathematics Education}\\
\small{Konkuk University,
Seoul 143-701, Korea} \\
{\small
{\it E-mail address}: {\tt skim12@konkuk.ac.kr}} \\
\\
{\sc Boram PARK\thanks{Corresponding author: borampark22@gmail.com}}\\
[1ex]
{\small
The School of Computational Science}\\
{\small Korea Institute for Advanced Study, Seoul 130-722, Korea} \\
{\small
{\it E-mail address}: {\tt borampark22@gmail.com}} \\
\end{tabular}
}
\begin{document}

\maketitle

\begin{abstract}
The Kneser graph $K(n,k)$ is the graph whose vertices are the $k$-elements subsets of an $n$-element set, with two vertices adjacent if the sets are disjoint. The square $G^2$ of  a graph $G$ is the graph defined on $V(G)$ such that two vertices $u$ and $v$ are adjacent in $G^2$ if the distance between $u$ and $v$ in $G$ is at most 2.
Determining the chromatic number of the square of the Kneser graph $K(2k+1, k)$ is an interesting
problem, but not much progress has been made.
Kim and Nakprasit~\cite{2004KN}  showed that $\chi(K^2(2k+1,k)) \leq 4k+2$, and
Chen, Lih, and Wu ~\cite{2009CLW} showed  that $\chi(K^2(2k+1,k)) \leq 3k+2$ for  $k \geq 3$.
In this paper, we give improved upper bounds on $\chi(K^2(2k+1,k))$.
We show that $\chi(K^2(2k+1,k)) \leq 2k+2$, if $ 2k +1 = 2^n -1$ for some  positive integer $n$.
Also we show that  $\chi(K^2(2k+1,k)) \leq \frac{8}{3}k+\frac{20}{3}$ for every integer $k\ge 2$.
In addition to giving improved upper bounds, our proof is
concise and can be easily understood by readers while the proof
in \cite{2009CLW} is very complicated.
Moreover, we show that $\chi(K^2(2k+r,k))=\Theta(k^r)$ for each integer $2 \leq r \leq k-2$.
\end{abstract}

\noindent
{\bf Keywords:} Kneser graph, chromatic number, square of graph






\section{Introduction}

For a finite set $X$, let ${X \choose k}$ be the set of all $k$-element subsets of $X$.
For $n \geq 2k$, for a finite set $X$ with $n$-elements,
the {\em Kneser graph} $K(n,k)$ is the graph whose vertex set is
${X \choose k}$ and two vertices $A$ and $B$ adjacent if and only if $A\cap B =\emptyset$.

Kneser graphs have many interesting properties and have been the subject of many researches. The problem of computing the chromatic number of a Kneser graph was conjectured by Kneser  and proved by Lov\'{a}sz~\cite{1978L} that $\chi(K(n,k))=n-2k+2$.
Also, several types of colorings of Kneser graphs have been considered.


For a simple graph $G$, the {\it square} $G^2$ of $G$ is defined such that $V(G^2) = V(G)$ and two vertices $x$ and $y$ are adjacent in $G^2$ if and only if the distance between $x$ and $y$ in $G$ is at most 2.
We denote the square of the Kneser graph $K(n,k)$ by $K^2(n,k)$.
The problem of computing $\chi (K^2(n,k))$, which was originally posed by F\"{u}redi, was introduced and discussed in \cite{2004KN}. As an independent set of $K^2(n,k)$
Note that that $A$ and $B$ are adjacent in $K^2(n,k)$ if and only if $A\cap B=\emptyset$ or $|A\cap B|\ge 3k-n$.
Therefore, $K^2(n,k)$ is the complete graph $K_t$ where $t = {n \choose k}$ if $n\ge 3k-1$, and  $K^2(n,k)$ is a perfect matching if $n=2k$.
But for  $2k+1\le n \le 3k-2$, the exact value of $\chi(K^2(n,k))$ is not known.
Hence it is an interesting problem to determine the chromatic number of the square of the Kneser graph $K(2k+1, k)$ as the first nontrivial case.
In 2004, Kim and Nakprasit~\cite{2004KN} showed that $\chi(K^2(2k+1,k))\le 4k$ if $k$ is odd and
 $\chi(K^2(2k+1,k))\le 4k+2$ if $k$ is even.
And then, in 2009, Chen, Lih, and Wu ~\cite{2009CLW} improved the bound as $\chi(K^2(2k+1,k))\le 3k+2$ for $k\ge 3$.

In this paper, we give improved  upper bounds on $\chi(K^2(2k+1,k))$ with a concise proof.
We show that $\chi(K^2(2k+1,k)) \leq \frac{8}{3}k+\frac{20}{3}$ for any integer  $k\ge 2$.
In particular, when $2k+1 = 2^n -1$ for some positive integer $n$, we give better upper bounds.
We show that $\chi(K^2(2k+1,k)) \leq 2k +2$, if $2k+1 = 2^n -1$ for some positive integer $n$.
Note that the proof
in \cite{2009CLW} is very complicated.
However,
our proof in this paper is
concise and can be easily understood by readers.

Considering the problem determining $\chi(K(n, k))$, which was solved by Lov\'{a}sz~\cite{1978L}, we can expect that the problem determining $\chi(K^2(2k+1, k))$ is difficult.
There is not even any conjecture on the  value of $\chi(K^2(2k+1, k))$.
We have observed that $\chi(K^2(2k+1,k)) \leq \alpha k + \beta$ for some real numbers $\alpha$ and $\beta$.  A natural interesting problem is determine  the least value of $\alpha$.
From the result in \cite{2004KN} and the result that $\chi(K^2(2k+1,k)) \leq \frac{8}{3}k+\frac{20}{3}$, we know that $1 \leq \alpha \leq  \frac{8}{3}$.

On the other hand, from the result that $\chi(K^2(2k+1,k)) \leq 2k +2$ for infinitely many special cases,
we can conjecture that
$\chi(K^2(2k+1,k)) \leq 2k +2$ for all  integers $k \geq k_0$ for some fixed integer $k_0$.
Regarding this conjecture, we give a supporting evidence.  We show that
for any fixed real number $\epsilon > 0$,
\[
\limsup_{k \rightarrow \infty}  \frac{\chi(K^2 (2k+1, k))}{k} \leq 2 + \epsilon.
\]

\medskip

In addition, we study $\chi(K^2(2k+r,k))$ where $2 \leq r \leq k-2$.
For $2 \leq r \leq k-2$,
we show that  ${k+r \choose r} + 1 \leq \chi(K^2 (2k+r,k)) \leq (r+2)(3k + \frac{3r+3}{2})^r$.  These are the first results for $2 \leq r \leq k-2$.
From the well-known fact that $(\frac{a}{b})^b < {a \choose b}$ for any $b < a$,
we can conclude that
for each integer $1 \leq r \leq k-2$, there exist constant real numbers $\alpha_r$ and $\beta_r$ such that
$\alpha_r k^r \leq \chi(K^2(2k+r,k)) \leq \beta_r k^r$, where $\alpha_r$ and $\beta_r$ depend on $r$.
That is,
$\chi(K^2(2k+r,k))=\Theta(k^r)$ for each integer $1 \leq r \leq k-2$.


\medskip

The coloring of the square of Kneser graphs is closely related with an intersecting family as follows.
A family ${\fami}$ is called an {\it $(n, k, L)$-system}
if ${\fami} \subset {[n] \choose k}$ and $|A \cap B| \in L$  for all
distinct elements $A$ and $B$ of ${\fami}$.
Let $m(n, k, L)$ denote the maximum size of an $(n, k, L)$-system.  The problem of determining
$m(n, k, L)$ was introduced by Deza, Erd\H{o}s, and Frankl in~\cite{De_Er_PF}. (See~\cite{PF-Ota} for more results about $m(n, k, L)$.)
The coloring of the square of Kneser graphs is related with $(n, k, L)$-system, since $m(2k+1, k, L) =  \alpha(K^2 (2k+1, k))$ when $L=
\left\{1, 2, \cdots, k-2 \right\}$.
Thus good upper bounds on $m(2k+1, k, L)$ provide good lower bounds on $\chi(K^2(2k+1, k))$
from the inequality that $\frac{n(H)}{\alpha(H)} \leq \chi(H)$ for every graph $H$.
Kim and Nakprasit \cite{2004KN} showed that $\chi(K^2 (9, 4)) \geq 11$ by showing that
$m(9, 4, \left \{1, 2\right\}) = 12$.  Later, Khodkar and Leach \cite{2009KL} showed that
$\chi(K^2(9, 4)) \leq 11$.  Thus it is concluded that $ \chi(G^2) = 11 = \big\lceil \frac{n(G^2)}{\alpha(G^2)}\big\rceil$ where $G = K(9, 4)$.

\medskip

In addition, the coloring of the square of a graph $G$ can be explained in a relation with $L(p, q)$-labelling problem introduced by Griggs and Yeh \cite{Griggs1992}.  For  nonnegative integers $p$ and $q$, an $L(p, q)$-labelling of a graph $G$ is a function $\phi : V(G) \rightarrow \{0, 1, \ldots, k \}$ such that
$|\phi(u) - \phi(v)| \geq p$ if $d_G (u, v) = 1$ and $|\phi(u) - \phi(v)| \geq q$ if $d_G (u, v) = 2$.
The $L(p,q)$-labelling number of $G$, denoted by $\lambda_{p,q}(G)$, is the least integer $k$ such that $G$ admits an
$L(p, q)$-labelling $\phi : V(G) \rightarrow \{0, 1, \ldots, k \}$.
The $L(p, q)$-labelling problem has attracted a considerable amount of interest.
Note that $\chi(G^2) = \lambda_{1, 1}(G) +1$.
(See \cite{Calamoneri} for a survey.)

\section{Coloring of the Kneser graph $K^2(2k+1,k)$}

Let $H$ be a finite additive abelian group and let $2^H$ be the set of subsets of $H$.
We define a function $\sigma_{H}: 2^H \rightarrow H$ such that for any subset $X$ of $H$, $\sigma_H (X)=\sum_{x\in X} x$, where the sum is over the addition of the group $H$. If there is no confusion, we denote $\sigma_H$ by $\sigma$.
The following two lemmas are simple, but will play a key role in the proofs of main results.

\begin{lemma}\label{lem:sum}
If $A$ and $B$ are two subsets of an abelian group $H$ such that $|A| = |B| = k$ and $|A \cap B| = k-1$ for some positive integer $k$, then
$\sigma_H(A)\neq \sigma_H(B)$.
\end{lemma}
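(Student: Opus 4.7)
The plan is to exploit the fact that under the hypothesis $|A|=|B|=k$ and $|A\cap B|=k-1$, the two sets differ by just one element on each side. More precisely, $|A\setminus B| = |A| - |A\cap B| = 1$ and likewise $|B\setminus A| = 1$, so I would write $A\setminus B = \{a\}$ and $B\setminus A = \{b\}$. The first thing to check is that $a\neq b$: this is immediate, since $a\in A\setminus B$ forces $a\notin B$, while $b\in B$, so equality $a=b$ is impossible.

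The key step is then to split each sum along the common part $C = A\cap B$. Since $A = C\sqcup\{a\}$ and $B = C\sqcup\{b\}$ are disjoint unions, the additive definition of $\sigma_H$ gives
\[
\sigma_H(A) = \sigma_H(C) + a, \qquad \sigma_H(B) = \sigma_H(C) + b.
\]
Subtracting in the group $H$ yields $\sigma_H(A) - \sigma_H(B) = a - b$, and this is nonzero in $H$ because $a\neq b$ as elements of $H$. Hence $\sigma_H(A)\neq \sigma_H(B)$, which is what we wanted.

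There is essentially no obstacle here; the only subtlety is the one-line verification that the two distinguished elements $a$ and $b$ are genuinely different elements of the group, which follows purely from set-theoretic considerations and does not use any structural property of $H$ beyond the cancellation law in an abelian group.
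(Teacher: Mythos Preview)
Your proof is correct and follows essentially the same approach as the paper: identify the single elements $a\in A\setminus B$ and $b\in B\setminus A$, decompose each sum over the common intersection, and conclude from $a\neq b$ via cancellation. Your write-up is in fact slightly more careful in justifying $a\neq b$ than the paper's own version.
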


\begin{proof}
Let $A\setminus B=\{g\}$ and $A\setminus B=\{h\}$ where $g\neq h$.
From the definition of $\sigma_H$, $\sigma_H(A)=\sigma_H(A\cap B) +g$ and  $\sigma_H(B)=\sigma_H(A\cap B) +h$.  Hence $\sigma_H(A)\neq \sigma_H(B)$ since $g \neq h$.
\end{proof}

For a positive integer $m$, let $\mathbb{Z}_m=\{ 0,1,\ldots, m-1 \}$ be the cyclic group of order $m$, and let $\mathbb{Z}^*_{m}=\mathbb{Z}_m\setminus\{ 0 \}$.
Let $\mathbb{Z}_2^n$ denote the direct product of $n$ copies of $\mathbb{Z}_2$, and let $\mathbb{Z}_2^n \times \mathbb{Z}_q$ denote the direct product of $\mathbb{Z}_2^n$ and $\mathbb{Z}_q$.

Let $\prec$ be the lexicographic ordering of $\mathbb{Z}_{2}^n \times\mathbb{Z}_{q}^*$ by considering $\mathbb{Z}_2$ and $\mathbb{Z}^*_q$ as subsets of $\mathbb{N}$.
That is, $(x_1, \ldots,x_{n+1}) \prec (y_1,\ldots,y_{n+1})$ if and only if there exists a positive integer $m$ in $\{1, \ldots, n+1\}$ such that
$x_i = y_i$ for all $i < m$ and $x_m < y_m$.
For convenience, let $F=\mathbb{Z}_{2}^n \times\mathbb{Z}_{q}^*$.
For any positive integer $\ell$, let $F_{\ell} $ be the set of the first $\ell$ elements of $F$ in the lexicographic ordering $\prec$.
For example, when $\ell=q-1$, the first $q-1$ elements of $F$ are $(0,\ldots,0,1)$, $(0,\ldots,0,2)$, \ldots, $(0,\ldots,0,q-1)$. That is,
$F_{q-1}=\{ (0,\ldots,0,1), (0,\ldots,0,2), \ldots, (0,\ldots,0,q-1) \}$.

\begin{lemma}\label{lem:even}
For an odd integer $q$ and for an integer $n$ with $n \geq 2$, let $F=\mathbb{Z}_{2}^n \times\mathbb{Z}^*_{q}$.
For a positive even integer $\ell$, let
\[X=\mathbb{Z}_{2}^n \times\mathbb{Z}_{q}\setminus( F_{\ell}\cup \{ (0,0,\ldots,0)\}).\]
Then the followings holds.
\begin{itemize}
\item[(i)] If $\ell \ge q-1$, then every element $(x_1,\ldots,x_{n+1})$ in $X$ has a coordinate $x_i$ such that
$x_i\neq 0$ and $1 \leq i \leq n$.
\item[(ii)]For each $1\le i\le n$, the number of elements in $X$ whose $i$th entry is 1 is even.
\end{itemize}
\end{lemma}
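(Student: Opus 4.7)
My plan is to handle the two parts separately, exploiting the block structure induced by the lex order $\prec$ on $F=\mathbb{Z}_2^n\times\mathbb{Z}_q^*$. For (i), I would first check that the $q-1$ smallest elements of $F$ in $\prec$ are precisely $(0,\ldots,0,1),\ldots,(0,\ldots,0,q-1)$, because any tuple with a nonzero entry among the first $n$ coordinates is $\prec$-larger than $(0,\ldots,0,q-1)$. Consequently, whenever $\ell\ge q-1$, the set $F_\ell\cup\{(0,\ldots,0)\}$ already contains every element of $\mathbb{Z}_2^n\times\mathbb{Z}_q$ whose first $n$ entries all vanish, and so any element of $X$ must carry a nonzero entry in some position $i$ with $1\le i\le n$.

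For (ii), I would partition $F$ into blocks $B_{\mathbf{x}}=\{(\mathbf{x},y):y\in\mathbb{Z}_q^*\}$ indexed by $\mathbf{x}\in\mathbb{Z}_2^n$, each of size $q-1$. Because $\prec$ compares the first $n$ coordinates before the last one, each block forms a consecutive interval in $\prec$, so
\[
F_\ell \;=\; B_{\mathbf{x}^{(0)}}\cup B_{\mathbf{x}^{(1)}}\cup\cdots\cup B_{\mathbf{x}^{(m-1)}}\cup B'_{\mathbf{x}^{(m)}},
\]
where $B'_{\mathbf{x}^{(m)}}$ is the initial segment of $B_{\mathbf{x}^{(m)}}$ of some length $r$, with $0\le r<q-1$ and $\ell=m(q-1)+r$. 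Since $q$ is odd, $q-1$ is even, and the hypothesis that $\ell$ is even then forces $r$ to be even as well. Now fix $i\in\{1,\ldots,n\}$. Each full block $B_{\mathbf{x}^{(j)}}$ contributes $q-1$ or $0$ to $|\{x\in F_\ell:x_i=1\}|$ according as the $i$th entry of $\mathbf{x}^{(j)}$ is $1$ or $0$, while $B'_{\mathbf{x}^{(m)}}$ contributes $r$ or $0$; all four candidate values are even. Thus $|\{x\in F_\ell\cup\{(0,\ldots,0)\}:x_i=1\}|$ is even, and since $|\{x\in\mathbb{Z}_2^n\times\mathbb{Z}_q:x_i=1\}|=2^{n-1}q$ is even (this is where the hypothesis $n\ge 2$ enters), subtraction yields that $|\{x\in X:x_i=1\}|$ is even.

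The only delicate point will be recognizing that $\prec$ really groups the elements of $F$ into consecutive blocks $B_{\mathbf{x}}$ of common even length $q-1$, so that the parity of the partial segment length $r$ is forced by the parity of $\ell$; once this structural observation is in hand, both (i) and (ii) reduce to short counting arguments, with the oddness of $q$ ensuring the blocks have even size and $n\ge 2$ ensuring the ambient count $2^{n-1}q$ is even.
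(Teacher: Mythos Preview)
Your argument is correct. Part (i) matches the paper's proof exactly. For part (ii) you use a slightly different (and arguably cleaner) decomposition than the paper: you group $F$ into the lex-consecutive blocks $B_{\mathbf{x}}=\{\mathbf{x}\}\times\mathbb{Z}_q^*$ of common even size $q-1$, so that $F_\ell$ is $m$ full blocks together with an initial segment of even length $r$, and each piece contributes an even number (either $0$, $q-1$, or $r$) to the count of elements with $i$th entry $1$. The paper instead pairs up consecutive elements of $F$ in $\prec$, observing that the $(2j-1)$th element has odd last coordinate (because each block has even length $q-1$) and hence the $(2j)$th element lies in the same block; thus every pair contributes $0$ or $2$. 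Both arguments rest on the same structural fact---elements within a block share their first $n$ coordinates and blocks have even size---and both finish with the same subtraction from $2^{n-1}q$ using $n\ge 2$. Your block-level bookkeeping is a bit more direct, while the paper's pairwise grouping avoids introducing the parameters $m$ and $r$; neither approach requires anything the other does not.
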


\begin{proof}
Since the first $q-1$ elements of $F=\mathbb{Z}_2^n\times \mathbb{Z}^*_{q}$ are $(0,\ldots,0,1)$, $(0,\ldots,0,2)$, \ldots, $(0,\ldots,0,q-1)$,
\[  \{ (x_1, \ldots,x_{n},x_{n+1})\in X : x_i= 0 \mbox{ for all } 1 \leq i \leq n \} \subseteq F_{\ell}\cup \{ (0,\ldots,0) \}.\]
Thus by the definition of $X$, each element $(x_1,\ldots,x_{n+1})$ in $X$ has a coordinate $x_i$ such that
$x_i\neq 0$ and $1 \leq i \leq n$.
Thus (i) holds.

\medskip

Next, for each $i$ $(1 \leq i \leq n)$, let $X_i$ denote the set of elements in $X$ whose $i$th entry is 1.  Then
\[\begin{array}{l}
|X_i|
=
| \{\mathbf{u} \in \mathbb{Z}_{2}^n \times\mathbb{Z}_{q} : i\text{th entry of } \mathbf{u} \text{ is 1}\}| - | \{\mathbf{w} \in F_{\ell} : i\text{th entry of } \mathbf{w} \text{ is 1}\}|
\end{array}.\]
Note that $| \{\mathbf{u} \in \mathbb{Z}_{2}^n \times\mathbb{Z}_{q} : i\text{th entry of } \mathbf{u} \text{ is 1}\}| = 2^{n-1}q$ where $n \geq 2$.
Hence to prove $(ii)$, it is enough to show that $| \{\mathbf{w} \in F_{\ell} : i\text{th entry of } \mathbf{w} \text{ is 1}\}|$ is even.

For each $1 \leq j \leq \frac{\ell}{2}$,  let $P_{j}$ be the set of the $(2j-1)$th element  and the $(2j)$th element in $F$ in the ordering $\prec$.
Then, since $\ell$ is even, $F_{\ell}$ is the disjoint union of $P_1$, $P_2$,\ldots, $P_{\frac{\ell}{2}}$.
Thus
\[| \{\mathbf{w} \in F_{\ell} : i\text{th entry of } \mathbf{w} \text{ is 1}\}| =\sum_{1\le j\le \frac{\ell}{2}} | \{\mathbf{w} \in P_j : i\text{th entry of } \mathbf{w} \text{ is 1}\}|.\]
Let $\mathbf{a}=(a_1,\ldots,a_n,a_{n+1})$ and $\mathbf{b}=(b_1,\ldots,b_n,b_{n+1})$ be the $(2j-1)$th element and $(2j)$th element in $F$.
Since $(2j-1)$ is odd,  $a_{n+1}$ is odd. Therefore $a_{n+1}<q-1$ since $q-1$ is even.
By the definition of $\prec$, it has to be $\mathbf{b}=(a_1,\ldots,a_n,a_{n+1}+1)$.
Therefore $a_i=1$ if and only if $b_i=1$, which implies that $|\{\mathbf{w} \in P_j : i\text{th entry of } \mathbf{w} \text{ is 1}\}|$ is zero or two for each $1\le j \le \frac{\ell}{2}$.
Thus  $| \{\mathbf{w} \in F_{\ell} : i\text{th entry of } \mathbf{w} \text{ is 1}\}|$ is even, and consequently $|X_i|$ is  even. Hence (ii) holds.
\end{proof}

Now we will prove the main result.

\begin{theorem} \label{result(2k+1)}
For integers $k, n, p$, and  $r$,
if $2k+1=(2^n-1)p+r$ where  $p\ge 1$, $n\ge2$, and  $0\le r\le 2^n-2$, then
\[\chi(K^2(2k+1, k))\le
\left\{\begin{array}{lll}
2^np&= 2k+1+p,  & \text{ if }r=0 \\
2^n(p+1)&=2k+1+p-r+2^n, & \text{ if }r \text{ is odd} \\
2^n(p+2)&=2k+1+p-r+2^{n+1},  &  \text{ if }r \text{ is even} \mbox{ and } r\neq 0.  \end{array}\right.\]
\end{theorem}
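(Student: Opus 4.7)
The plan is to construct an explicit proper coloring of $K^2(2k+1,k)$ matching the stated upper bound by embedding the ground set into an abelian group $H$ and coloring each $k$-subset $A$ by $\sigma_H(A)$. The underlying group is $H = \mathbb{Z}_2^n \times \mathbb{Z}_q$, where $q$ is chosen case-by-case so that $|H|$ equals the claimed bound: $q = p$ when $r = 0$, $q = p+1$ when $r$ is odd, and $q = p+2$ when $r$ is even and nonzero. A short parity argument using the identity $2k+1 = (2^n-1)p + r$ shows that $p$ inherits the opposite parity of $r$, so in every case $q$ is odd.

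I then take the ground set to be $X = H \setminus (F_\ell \cup \{(0,\ldots,0)\})$, where $F = \mathbb{Z}_2^n \times \mathbb{Z}_q^*$ and $\ell$ is the unique integer making $|X| = 2k+1$. Solving gives $\ell = p-1$, $\ell = 2^n + p - r - 1$, and $\ell = 2^{n+1} + p - r - 1$ in the three cases respectively; in each, $\ell$ is a nonnegative even integer with $\ell \ge q-1$, so Lemma \ref{lem:even} applies (the degenerate subcase $r = 0$, $p = 1$, $\ell = 0$ is handled directly, since then $X = \mathbb{Z}_2^n \setminus \{(0,\ldots,0)\}$ and both conclusions of Lemma \ref{lem:even} are immediate). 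The coloring $A \mapsto \sigma_H(A)$ uses at most $|H| = 2^n q$ colors.

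The verification that the coloring is proper splits along the two adjacency types in $K^2(2k+1,k)$, namely $|A\cap B|=k-1$ and $A\cap B=\emptyset$. For the first, Lemma \ref{lem:sum} gives $\sigma_H(A) \neq \sigma_H(B)$ immediately. For the second, $A \cup B = X \setminus \{x\}$ for some $x \in X$, whence
\[
\sigma_H(A) + \sigma_H(B) = \sigma_H(X) - x.
\]
If $\sigma_H(A) = \sigma_H(B)$, then $2\sigma_H(A) = \sigma_H(X) - x$. Projecting onto the first $n$ coordinates, the left-hand side vanishes since $2y = 0$ in $\mathbb{Z}_2$, and the first $n$ coordinates of $\sigma_H(X)$ vanish by Lemma \ref{lem:even}(ii). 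This forces the first $n$ coordinates of $x$ to be zero, contradicting Lemma \ref{lem:even}(i).

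The main obstacle is the arithmetic bookkeeping: verifying across the three cases that the chosen $q$ is odd, that the corresponding $\ell$ is a nonnegative even integer with $\ell \ge q-1$, and that $|H|$ equals each of the closed-form bounds printed in the statement. Once this setup is in place, the coloring argument runs uniformly in all three cases, so the whole proof reduces to a single unified application of Lemmas \ref{lem:sum} and \ref{lem:even}.
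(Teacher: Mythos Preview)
Your proposal is correct and follows essentially the same approach as the paper: the same group $H=\mathbb{Z}_2^n\times\mathbb{Z}_q$, the same ground set $X$ (your uniform description with $\ell=p-1$ in the $r=0$ case yields exactly the paper's explicit $X=\mathbb{Z}_2^n\times\mathbb{Z}_p\setminus\{(0,\ldots,0,i):i\in\mathbb{Z}_p\}$), and the same coloring $A\mapsto\sigma_H(A)$. Your treatment of the disjoint case via $\sigma_H(A)+\sigma_H(B)=\sigma_H(X)-x$ and projection onto the $\mathbb{Z}_2^n$ factor is a slightly slicker rephrasing of the paper's parity count on $|A\cap X_i|$ and $|B\cap X_i|$, but the content is identical.
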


\begin{proof} For each case of the following, we will define a group $H$ and a subset $X$ of $H$ such that $|X|=2k+1$ to define the Kneser graph $K(2k+1, k)$ on the ground set $X$.

\medskip

\noindent\textbf{Case 1.} $r=0$.

\noindent  Let $H = \mathbb{Z}_2^n \times \mathbb{Z}_p$ be the group obtained by the direct product of $\mathbb{Z}_2^n$ and $\mathbb{Z}_p$.
Let $X = \mathbb{Z}_2 ^n \times  \mathbb{Z}_p \setminus\{ (0,\ldots,0,i): i\in \mathbb{Z}_p \}$.
Then $|X|=p 2^n-p=2k+1$.

\medskip

\noindent\textbf{Case 2.} $r$ is an odd integer.

\noindent Since $2k+1$ and $r$ are odd, $p$ is even and so $p+1$ is odd.
In this case, let $H$ be the group $\mathbb{Z}_2 ^n \times  \mathbb{Z}_{p+1}$.
Let $\ell=2^n-1-r+p$ and $F=\mathbb{Z}_{2}^n \times\mathbb{Z}^*_{p+1}$.  We define $X$ as follows.
\[X=\mathbb{Z}_{2}^n \times\mathbb{Z}_{p+1}\setminus( F_{\ell}\cup \{ (0, \ldots, 0) \}).\]
Note that $\ell=2^n-1-r+p$ is an even integer, and  $X$ is a subset of $H$ such that $|X|=2^n(p+1)-(2^n-1-r+p+1)=2^np-p+r=2k+1$.

\medskip

\noindent\textbf{Case 3.}  $r$ is an even integer, $r\neq 0$.

\noindent In this case, let $H = \mathbb{Z}_2 ^n \times  \mathbb{Z}_{p+2}$.
Since $2k+1$ is odd and  $r$ is even, $p$ is odd and so $p+2$ is odd.
Let $\ell=2^{n+1}-2-r+(p+1)$ and $F=\mathbb{Z}_{2}^n \times\mathbb{Z}^*_{p+2}$.  We define $X$ as follows.
\[X=\mathbb{Z}_{2}^n \times\mathbb{Z}_{p+2}\setminus( F_{\ell} \cup \{ (0,\ldots,0)\}).\]
Note that $\ell=2^{n+1}-2-r+(p+1)$ is an even integer, and $X$ is a subset of $H$ such that $|X|=2^n(p+2)-(2^{n+1}-2-r+p+1+1)=2^np-p+r=2k+1$.

\bigskip

First, we will show that the following claim.

\bigskip

\begin{claim} \label{claim-one}
For any element $(x_1,\ldots,x_{n+1})\in X$, $x_i\neq 0$ for some $i\in \{1,2,\ldots,n\}$.
\end{claim}

\medskip

\begin{proof}
If $r=0$, then it is clear from the definition of $X$.
Suppose that $r\neq 0$.
Since $0 \le r \le 2^n-2$,
\[ \ell = \left\{ \begin{array}{lll}
2^n-1-r+p & \ge p  & \text{ if }r \mbox{ is odd} \\
2^{n+1}-2-r+(p+1)& \ge p+1 & \text{ if } r \mbox{ is even}. \\
\end{array}
\right.\]
Moreover, if $r$ is odd,  then $F=\mathbb{Z}_{2}^n \times\mathbb{Z}^*_{p+1}$ and $p+1$ is odd. And
if $r$ is even, then $F=\mathbb{Z}_{2}^n \times\mathbb{Z}^*_{p+2}$ and $p+2$ is odd.
Therefore  $x_i\neq 0$ for some $i\in \{1,2,\ldots,n\}$ by  (i) of  Lemma~\ref{lem:even}.
This completes the proof of Claim \ref{claim-one}.
\end{proof}

\bigskip

Let $G$ be the Kneser graph defined on the set $X$, that is,
$V(G) = {X \choose k}$.
Define $\sigma_H : V(G^2) \rightarrow H$, where $\sigma_H (X)=\sum_{x\in X} x$.  We will show that $\sigma_H(A) \neq \sigma_H (B)$ for each edge $AB$ of $G^2$.
We denote $\sigma_H$ by $\sigma$ for simplicity.

Let $AB$ be an edge in $G^2$.
Note that $|A \cap B| = 0$ or $|A \cap B| = k-1$.
If $|A \cap B| = k-1$, then $\sigma(A)\neq \sigma(B)$ by Lemma~\ref{lem:sum}.
Hence it is remained to show that  $\sigma(A)\neq \sigma(B)$ when $|A \cap B| = 0$.

From now on, we assume that $|A \cap B| = 0$.
Since $|A|=|B|=k$ and $|X|=2k+1$, there exists unique element $\mathbf{z}\in X$ such that $\mathbf{z}\not\in A\cup B$.
Let $\mathbf{z}=(z_1,\ldots, z_{n+1})$.
By Claim~\ref{claim-one},  $z_i\neq 0$ for  some $i\in \{1,2,\ldots,n\}$.  Fix an $i$ such that $z_i \neq 0$ and $i\in \{1,2,\ldots,n\}$.
Let $X_i=\{ (a_1,\ldots,a_{n+1})\in  X :  a_i=1 \}$, which is the set of elements in $X$ whose $i$th entry is 1.
Now we can show that $|X_i|$ is even as follows.
For the case when $r=0$, we have that $|X_i|=|\mathbb{Z}_2^{n-1}\times \mathbb{Z}_p|=2^{n-1}\times p$.
And for the case when $r\neq 0$, since $\ell$ is an even integer,  $|X_i|$ is even
by (ii) of Lemma~\ref{lem:even}.

\begin{claim}\label{claim:entry}
For any subset $A$ of $X$, $|A\cap X_i| \pmod{2}$ is the $i$th entry of $\sigma(A)$ where $1 \leq i \leq n$.
\end{claim}

\begin{proof}
Let $A=\{ \mathbf{a}_1,\mathbf{a}_2,\ldots,\mathbf{a}_k \}\subset X$.
For each $1\le j\le k$,   denote $\mathbf{a}_j =(a_{j1},a_{j2}, \ldots,a_{j(n+1)})$.
Then
\begin{eqnarray*}
\sigma(A) &=&\mathbf{a}_1+\mathbf{a}_2+\cdots+\mathbf{a}_k = ( \sum_{j=1}^{k}a_{j1},  \sum_{j=1}^{k}a_{j2}, \ldots,  \sum_{j=1}^{k}a_{j(n+1)}).
\end{eqnarray*}

For $1 \leq i \leq n$, since $a_{ji}$ is 0 or 1,
\[\sum_{j=1}^{k}a_{ji} = \left\{\begin{array}{ll}  0 &\text{ if there are even number of } \mathbf{a}_j \text{ in } A
\text{  such that } a_{ji}=1  \\
 1 &\text{ if there are odd number of } \mathbf{a}_j \text{ in } A \text{ such that } a_{ji}=1.  \end{array}\right. \]
Note that $|A\cap X_i|$ is the number of elements of $A$ whose $i$th entry is 1.
Therefore $|A\cap X_i| \pmod{2}$ is the $i$th entry of $\sigma(A)$.  This completes the proof of Claim \ref{claim:entry}.
\end{proof}

Note that $\mathbf{z}\in X_i$ by the definition of $\mathbf{z}$.
Since $X_i\setminus\{\mathbf{z} \}$ is the disjoint union of $A\cap X_i$ and $B\cap X_i$,
we have that $|A\cap X_i| +|B\cap X_i|=|X_i|-1$.
As $|X_i|$ is even, $|X_i|-1$ is odd.
Therefore, one of $|A\cap X_i|$ and $|B\cap X_i|$ is odd and the other is even.
Note that by Claim~\ref{claim:entry},
$|A\cap X_i| \pmod{2}$ is the $i$th entry of $\sigma(A)$, and
$|B\cap X_i| \pmod{2}$ is the $i$th entry of $\sigma(B)$.
Thus the $i$th entry of two vectors $\sigma(A)$ and $\sigma(B)$ are distinct, and consequently $\sigma(A) \neq \sigma(B)$.

Hence $\sigma$ provides a proper coloring of $G^2$ with $|H|$ colors, where $|H|$ is the number of elements of $H$.  Therefore $\chi(G^2)\le |H|$.
\end{proof}

\medskip
Theorem~\ref{result(2k+1)} gives the following corollary.

\begin{corollary} \label{upper;2k+2}
If $2k+1 =2^n-1$ for some integer $n\ge 2$, then $\chi(K^2(2k+1, k))\le 2k+2$.
\end{corollary}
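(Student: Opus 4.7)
The plan is to derive this corollary as an immediate specialization of Theorem~\ref{result(2k+1)}. I need to express $2k+1$ in the form $(2^n-1)p + r$ with $p \geq 1$, $n \geq 2$, and $0 \leq r \leq 2^n - 2$, and then read off the appropriate bound from the three cases of the theorem.

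The hypothesis $2k+1 = 2^n - 1$ suggests taking $p = 1$ and $r = 0$, so that $(2^n-1)p + r = (2^n - 1) \cdot 1 + 0 = 2^n - 1 = 2k+1$. This matches the required form, and since $n \geq 2$, the hypotheses of Theorem~\ref{result(2k+1)} are satisfied. With $r = 0$ we are in Case 1 of the theorem, which yields
\[
\chi(K^2(2k+1,k)) \leq 2^n p = 2^n.
\]
Substituting $2^n = (2k+1) + 1 = 2k+2$ gives the desired bound.

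There is no real obstacle here: the corollary is purely a matter of choosing the parameters $p$ and $r$ that fit the arithmetic identity $2k+1 = 2^n-1$ and invoking the appropriate case of the theorem. The only thing worth double-checking is that the identification $2^n = 2k+2$ is consistent with the equality $2^n p = 2k+1+p$ appearing in Case 1 when $p=1$, which it is.
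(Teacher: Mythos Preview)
Your proposal is correct and follows exactly the same approach as the paper: set $p=1$ and $r=0$ so that $2k+1=(2^n-1)\cdot 1+0$, invoke Case~1 of Theorem~\ref{result(2k+1)}, and read off $\chi(K^2(2k+1,k))\le 2^n p = 2^n = 2k+2$. There is nothing to add.
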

\begin{proof}
If $2k+1=2^{n+1}-1$, then  $p=1$ and $r=0$.  Thus $\chi(K^2(2k+1, k))\le 2k+2$ by Theorem \ref{result(2k+1)}.
\end{proof}

In addition, from $2k+1=(2^n-1)p+r$, we have the following relations.

\[
\begin{array}{lllll}
(a) & 2^np&= 2k+1+p &=\frac{2^{n+1}}{2^n-1}k  +\frac{2^n}{2^{n}-1} & \text{ if }r=0 \\
(b) & 2^n(p+1)&=2k+1+p-r+2^n&=\frac{2^{n+1}}{2^n-1}k + \frac{2^n(2^n-r)}{2^n-1} & \text{ if }r \text{ is odd}\\
(c) & 2^n(p+2)&=2k+1+p-r+2^{n+1}&=\frac{2^{n+1}}{2^n-1}k + \frac{2^n(2^{n+1}-r-1)}{2^n-1}  &  \text{ if }r \text{ is even}, r\neq 0  \end{array}
\]
Note that $\frac{2^n(2^n-r)}{2^n-1} \leq \frac{2^n(2^{n+1}-r-1)}{2^n-1}$ for all $n$ and $r$.
Thus
\begin{equation} \label{upper-large-n}
\chi(K^2 (2k+1, k)) \leq
\frac{2^{n+1}}{2^n-1}k + \frac{2^{n}(2^{n+1}-3)}{2^n-1}  \leq
\frac{2^{n+1}}{2^n-1} \big(k +  (2^n-1) \big)
\end{equation}

\medskip

\begin{corollary} \label{new-general}
For any integer $k\ge 2$,
$\chi(K^2(2k+1,k))\le \frac{8}{3}k+\frac{20}{3}$.
\end{corollary}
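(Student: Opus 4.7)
The plan is to specialize Theorem~\ref{result(2k+1)} to the smallest admissible value $n=2$, since this is precisely the choice that makes the linear coefficient $\frac{2^{n+1}}{2^n-1}$ equal to $\frac{8}{3}$. For $k\ge 2$ we have $2k+1\ge 5>3=2^2-1$, so we can uniquely write $2k+1=3p+r$ with $p\ge 1$ and $r\in\{0,1,2\}$, which places us squarely in the hypothesis of the theorem.

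From here I would simply split into the three cases dictated by the parity/value of $r$ and read off each upper bound. In the case $r=0$, Theorem~\ref{result(2k+1)} gives $\chi(K^2(2k+1,k))\le 4p=\frac{4}{3}(2k+1)=\frac{8}{3}k+\frac{4}{3}$. In the case $r=1$, substituting $p=\frac{2k}{3}$ into $4(p+1)$ yields $\frac{8}{3}k+4=\frac{8}{3}k+\frac{12}{3}$. In the case $r=2$, substituting $p=\frac{2k-1}{3}$ into $4(p+2)$ yields $\frac{8}{3}k+\frac{20}{3}$. The third case produces the largest of the three values, so $\frac{8}{3}k+\frac{20}{3}$ dominates uniformly in $r$, giving the desired universal bound.

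There is essentially no obstacle once Theorem~\ref{result(2k+1)} is available; the whole argument is a three-case arithmetic check together with the observation that $n=2$ is the unique $n$ producing the slope $\frac{8}{3}$. As an even shorter alternative, one could just invoke the consolidated inequality displayed immediately before~(\ref{upper-large-n}), namely $\chi(K^2(2k+1,k))\le \frac{2^{n+1}}{2^n-1}k+\frac{2^n(2^{n+1}-3)}{2^n-1}$, and specialize to $n=2$ to obtain $\frac{8}{3}k+\frac{20}{3}$ in a single line.
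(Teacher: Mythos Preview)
Your proposal is correct and matches the paper's approach. The paper's proof is exactly your ``shorter alternative'': it plugs $n=2$ into the middle term of inequality~(\ref{upper-large-n}) to get $\frac{8}{3}k+\frac{20}{3}$ in one line, while your primary route simply unwinds that inequality into the three cases $r\in\{0,1,2\}$ of Theorem~\ref{result(2k+1)} before taking the maximum.
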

\begin{proof}
When $n =2$, from (\ref{upper-large-n}), we have
\[\chi(K^2(2k+1,k))\leq \frac{2^{n+1}}{2^n-1}k + \frac{2^{n}(2^{n+1}-3)}{2^n-1} = \frac{8}{3}k+\frac{20}{3}.\]
\end{proof}

It was showed in \cite{2009CLW} that $\chi(K^2(2k+1,k))\le 3k+2$.
Corollary \ref{new-general} improves the result in \cite{2009CLW} when $k \geq 15$.
The following corollary implies that the upper bounds on $\chi(K^2 (2k+1, k))$ become much smaller when $k$ is sufficiently large.

\begin{corollary} \label{large k}
For any fixed real number $\epsilon > 0$, there exists a positive integer $k_0$ depending on $\epsilon$ such that
\[
\chi(K^2 (2k+1, k)) \leq (2 + \epsilon)(k +  \sqrt{2k+1}),
\]
for any positive integer $k \geq k_0$.  Thus for any fixed real number $\epsilon > 0$,
\[
\limsup_{k \rightarrow \infty} \frac{\chi(K^2 (2k+1, k))}{k} \leq 2 + \epsilon.
\]
\end{corollary}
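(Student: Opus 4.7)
The plan is to apply inequality~(\ref{upper-large-n}) with $n = n(k)$ chosen to grow slowly with $k$, so that the leading constant $2^{n+1}/(2^n-1)$ tends to $2$ while the additive correction $2^n-1$ stays at most $\sqrt{2k+1}$. The key point is that both $n$ and $k$ can be tuned independently, subject only to the hypothesis $2k+1 = (2^n-1)p + r$ of Theorem~\ref{result(2k+1)} with $p\ge 1$, which amounts to $2^n-1 \le 2k+1$.

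First I would fix $\epsilon > 0$ and choose an integer $N\ge 2$ large enough that $\dfrac{2^{N+1}}{2^N-1} \le 2+\epsilon$; explicitly any $N$ with $2^N \ge 1 + 2/\epsilon$ works. Next, for each positive integer $k$, define
\[
n(k) = \bigl\lfloor \log_2\bigl(1+\sqrt{2k+1}\bigr)\bigr\rfloor,
\]
so that $n(k)$ is the largest integer with $2^{n(k)}-1 \le \sqrt{2k+1}$. Since $n(k)\to\infty$ as $k\to\infty$, there exists a $k_0$ (depending on $\epsilon$) such that $n(k)\ge N$ whenever $k\ge k_0$.

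For such $k$, the choice of $n(k)$ guarantees $2^{n(k)}-1 \le \sqrt{2k+1} \le 2k+1$, so writing $2k+1 = (2^{n(k)}-1)p + r$ with $0\le r \le 2^{n(k)}-2$ automatically gives $p\ge 1$, and Theorem~\ref{result(2k+1)} applies. Inequality~(\ref{upper-large-n}) then yields
\[
\chi\bigl(K^2(2k+1,k)\bigr) \;\le\; \frac{2^{n(k)+1}}{2^{n(k)}-1}\bigl(k + (2^{n(k)}-1)\bigr) \;\le\; (2+\epsilon)\bigl(k + \sqrt{2k+1}\bigr),
\]
where the two factors are bounded using $n(k)\ge N$ and $2^{n(k)}-1 \le \sqrt{2k+1}$ respectively. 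This is exactly the desired inequality. The limsup statement is then immediate: dividing by $k$ gives $\chi(K^2(2k+1,k))/k \le (2+\epsilon)(1 + \sqrt{2k+1}/k)$, and since $\sqrt{2k+1}/k \to 0$, taking $\limsup$ on the left yields $\limsup_{k\to\infty}\chi(K^2(2k+1,k))/k \le 2+\epsilon$.

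There is no real obstacle here; the argument is essentially a careful balancing of two competing error terms in~(\ref{upper-large-n}). The only thing to double-check is the monotonicity used implicitly (namely that making $n$ larger shrinks the multiplicative constant but enlarges the additive one), and that the hypothesis $p\ge 1$ of Theorem~\ref{result(2k+1)} is preserved by the choice $2^{n(k)}-1\le \sqrt{2k+1}$. Both are routine.
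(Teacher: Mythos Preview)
Your argument is correct and follows essentially the same route as the paper: both bound the two factors in inequality~(\ref{upper-large-n}) by choosing $n$ large enough to make $2^{n+1}/(2^n-1)\le 2+\epsilon$ and $k$ large enough to make $2^n-1\le\sqrt{2k+1}$. The only difference is that the paper simply fixes a single $n_0=n_0(\epsilon)$ and then takes $k_0$ large enough that $2^{n_0}-1\le\sqrt{2k+1}$ for all $k\ge k_0$, whereas you let $n=n(k)$ grow with $k$; your version works but is slightly more elaborate than necessary.
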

\begin{proof}
For any fixed $\epsilon > 0$, there exists $n_0$ such that $\frac{2^{n_0+1}}{2^{n_0}-1} <  2 + \epsilon$.
Fix $n_0$.
Then there exists a positive integer $k_0$ such that  $2^{n_0}-1 \leq \sqrt{2k+1}$ for all integers $k \geq k_0$.  Thus for any $k \geq k_0$, we can find integers $p$ and $r$ such that $2k+1 = (2^{n_0}-1)p + r$ and $0\le r\le 2^{n_0}-2$ as in the proof of Theorem \ref{result(2k+1)}.  Then by (\ref{upper-large-n}), it holds that
\[
\chi(K^2 (2k+1, k)) \leq
\frac{2^{n_0 +1}}{2^{n_0} -1} \big(k +  2^{n_0}-1 \big) \leq (2 + \epsilon)(k + \sqrt{2k+1}).
\]
\end{proof}


\section{Coloring of the Kneser graph $K(2k+r,k)$ for $2 \leq r \leq k-2$}

In this section, we give upper bounds on $\chi(K^2(2k+r,k))$ for all $2\le r \le k-2$.

\begin{theorem} \label{general-r}
Let $\mathbb{F}$ be a finite field with characteristic $p$ such that  $p > r$ and $|\mathbb{F}|>2k+r$.
Then $\chi(K^2(2k+r,k)) \le (r+2)|\mathbb{F}|^r$, where $|\mathbb{F}|$ is the number of elements of $\mathbb{F}$.
\end{theorem}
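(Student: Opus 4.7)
The plan is to construct an explicit $(r+2)|\mathbb{F}|^r$-coloring of $K^2(2k+r,k)$, combining $r$ power-sum invariants valued in $\mathbb{F}$ (to resolve adjacencies with large intersection) with a combinatorial tag taking $r+2$ values (to resolve disjoint adjacencies). Recall that $A,B\in\binom{X}{k}$ are adjacent in $K^2(2k+r,k)$ exactly when $|A\cap B|=0$ or $|A\cap B|\ge k-r$. I would embed $X=\{x_1,\ldots,x_{2k+r}\}$ into $\mathbb{F}$ (possible since $|\mathbb{F}|>2k+r$), fix an $(r+1)$-subset $Y=\{y_1,\ldots,y_{r+1}\}\subseteq X$ with a chosen indexing, and define
\[
c(A)=\bigl(p_1(A),\ldots,p_r(A),\phi(A)\bigr)\in\mathbb{F}^r\times\{1,\ldots,r+2\},
\]
where $p_j(A):=\sum_{a\in A}a^j$, and $\phi(A):=\min\{i:y_i\in A\}$ when $A\cap Y\neq\emptyset$, and $\phi(A):=r+2$ otherwise. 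This $c$ uses at most $(r+2)|\mathbb{F}|^r$ colors, so it suffices to verify that $c$ is a proper coloring.

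For adjacent $A,B$ with $|A\cap B|=k-t$ and $1\le t\le r$, I would set $A'=A\setminus B$ and $B'=B\setminus A$, each of size $t$. Since $p_j(A)=p_j(A\cap B)+p_j(A')$ (and similarly for $B$), the equalities $p_j(A)=p_j(B)$ for $j=1,\ldots,t$ translate to $p_j(A')=p_j(B')$ in the same range. The hypothesis $p>r\ge t$ makes $1,2,\ldots,t$ invertible in $\mathbb{F}$, so Newton's identities recursively determine $e_1(A'),\ldots,e_t(A')$ from the power sums; hence $e_j(A')=e_j(B')$ for $j\le t$, and $A'$, $B'$ are the root sets of the same monic degree-$t$ polynomial, forcing $A'=B'$ and then $A=B$, a contradiction.

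For adjacent $A,B$ with $A\cap B=\emptyset$, the complement $C=X\setminus(A\cup B)$ has size exactly $r$, so $|Y|=r+1>|C|$ forces $Y\not\subseteq C$: at least one of $A\cap Y$ and $B\cap Y$ is nonempty. Because these two are disjoint subsets of $Y$, I split into two subcases. If exactly one is empty, then one of $\phi(A),\phi(B)$ equals $r+2$ while the other lies in $\{1,\ldots,r+1\}$. If both are nonempty, the smaller of $\phi(A),\phi(B)$, say $\phi(A)$, witnesses $y_{\phi(A)}\in A$ and hence $y_{\phi(A)}\notin B$, so $\phi(B)>\phi(A)$. Either way $\phi(A)\neq\phi(B)$, and therefore $c(A)\neq c(B)$.

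The main conceptual obstacle is the design of $\phi$: the power sums alone cannot separate disjoint pairs, because for fixed $A$ the constraints $p_j(A)=p_j(B)$ pin down $A$ only up to the $r$-element complement $C=X\setminus(A\cup B)$, leaving potentially many twins of $A$ in the disjoint case. The tag $\phi$ is tailored to exactly this gap: the strict inequality $|Y|>|C|$ forces $Y$ to meet $A\cup B$, and disjointness of $A$ and $B$ then forces $A\cap Y$ and $B\cap Y$ to have distinct minima indexed in $Y$. The characteristic hypothesis $p>r$ enters only in the intersecting case, where it is exactly what is needed to invert the Newton recursion.
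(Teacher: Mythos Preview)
Your proof is correct and follows essentially the same route as the paper: both define the coloring $A\mapsto(\sigma_1(A),\ldots,\sigma_r(A),\phi(A))$, use Newton's identities together with the hypothesis $p>r$ to handle the intersecting case, and use $\phi$ to handle the disjoint case. The only difference is cosmetic: the paper takes $\phi$ to be an arbitrary proper $(r+2)$-coloring of $K(2k+r,k)$ (invoking $\chi(K(2k+r,k))=r+2$), whereas you build $\phi$ explicitly via the minimum-index rule on a fixed $(r+1)$-set $Y$---which is precisely Kneser's classical upper-bound coloring---making your argument self-contained.
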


\begin{proof}
Let $X\subset \mathbb{F}$ with $|X|=2k+r$.
Let $G=K(2k+r,k)$ be the Kneser graph defined on the set $X$, that is,
$V(G) = {X \choose k}$.  We define a function $f:{X \choose k} \rightarrow \mathbb{F}^r
\times \mathbb{N}$ by
\[  f(A)=(\sigma_1(A),\sigma_2(A), \ldots, \sigma_r(A), \phi(A))  \]
where $\sigma_j(A) = \sum_{a\in A} a^j$ for $1 \leq j \leq r$, and
a proper ($r+2$)-coloring $\phi$ of $K(2k+r,k)$.  Note that $\chi(K(2k+r,k)) = r+2$ and so the range of $f$ has the size at most $(r+2)|\mathbb{F}|^r $.

\medskip
We will show that $f(A)\neq f(B)$  for each edge $AB$ of $G^2$.
If $AB$ is an edge of $G^2$, then $|A \cap B| \in \{0, k-r,k-r-1,\ldots, k-1 \}$.
First, if $AB$ is an edge of $G^2$ with $A\cap B=\emptyset$, then $AB$ is an edge of $G$ and so $\phi(A) \neq \phi(B)$ as $\phi$ is a proper ($r+2$)-coloring of $K(2k+r,k)$.
Hence $f(A) \neq f(B)$.

\medskip
Next, we will show that $f(A) \neq f(B)$ when $AB$ is an edge of $G^2$ with $A \cap B \neq \emptyset$.
Suppose that  $AB$ is an edge of $G^2$ with $A \cap B \neq \emptyset$ and $f(A)= f(B)$.
Then $k-r \le|A\cap B|\le k-1$.
Let $|A\cap B|=k-s$ for some $1\le  s \le r$.
We can denote $A=\{ a_1, \ldots,a_{s},x_1,\ldots,x_{k-s}\}$ and $B=\{ b_1, \ldots,b_{s},x_1,\ldots, x_{k-s}\}$ where $a_i \neq b_j$ for all $i, j \in \{1, \ldots, s\}$.
For convenience  let $A'=\{ a_1, \ldots,a_{s}\}$ and $B'=\{ b_1, \ldots,b_{s}\}$.
Note that $A'$ and $B'$ are disjoint.
Since $f(A) = f(B)$, we have that $\sigma_i(A)=\sigma_i(B)$ for each $1\le i\le s$.  Thus
\begin{eqnarray*}
a_1+a_2+\cdots+a_s&=&b_1+b_2+\cdots +b_s \\
a_1^2+a_2^2+\cdots+a_s^2&=&b_1^2+b_2^2+\cdots +b_s^2 \\
&\vdots&  \\
a_1^s+a_2^s+\cdots+a_s^s&=&b_1^s+b_2^s+\cdots +b_s^s.
\end{eqnarray*}

Let $e_i(Z)$ is the $i$th the elementary symmetric function of $Z$ that
is the sum of all distinct products of $i$ distinct elements in $Z$.
That is, when $Z=\{ z_1,\ldots,z_n\}$, ${\displaystyle e_i(Z)=\sum_{1 \leq {t_1}<{t_2}<\cdots<t_i \leq n} z_{t_1}z_{t_2}\cdots z_{t_i} }$.
For example, when $Z = \{x_1, x_2, x_3, x_4 \}$,  \ $e_1 (Z) = x_1 + x_2 + x_3 + x_4$, $e_2(Z)=x_1x_2+x_1x_3+x_1x_4+x_2x_3+x_2x_4+x_3x_4$,
$ e_3 (Z) = x_1x_2x_3 + x_1x_2x_4 + x_1x_3x_4 + x_2x_3x_4$,
and $e_4 (Z) =  x_1x_2x_3x_4$.

\medskip

Note that the well-known Newton's identity \cite{Newton2003} is stated like below.   For each $1\le i \le n$,
\[
ie_{i}(Z)= \sum_{t=1}^{i} (-1)^{t-1}e_{i-t}(Z)\sigma_t(Z),
\]

\begin{claim} \label{equal-newton}
$ e_i(A') =  e_i (B')$ for all $1\le i \le s$
\end{claim}
\begin{proof}
We can show that $ e_i(A') =  e_i (B')$ for all $1\le i \le s$ recursively.
Clearly $ e_1(A') =  e_1 (B')$. Now suppose that $e_{j}(A ') = e_{j}(B ')$ for all $1 \leq j \leq i-1$.
Next, we will show that $ e_i(A') =  e_i (B')$.
Since $\sigma_i(A') = \sigma_i(B')$ for all $1 \leq i \leq s$, we have that
\[
ie_{i}(A ')=\sum_{t=1}^{i} (-1)^{t-1}e_{i-t}(A')\sigma_t(A')= \sum_{t=1}^{i} (-1)^{t-1}e_{i-t}(B')\sigma_t(B') = ie_i(B').
\]
Thus $i e_i(A')=i e_i(B')$, and then we conclude that
$e_i(A')=e_i(B')$ since the characteristic of $\mathbb{F}$ is greater than $s$.
Thus $ e_i(A') =  e_i (B')$ for all $1\le i \le s$.  This completes the proof Claim \ref{equal-newton}.
\end{proof}
Therefore by using Claim \ref{equal-newton}, we have that
\begin{eqnarray*}
(b_1-a_1)(b_1-a_2)\cdots (b_1-a_s) &=&b_1^s-e_1(A')b_1^{s-1}+e_{2}(A')b_1^{s-2}+\cdots+(-1)^se_s(A') \\
&=&b_1^s-e_1(B')b_1^{s-1}+e_{2}(B')b_1^{s-2}+\cdots+(-1)^se_s(B')\\
&=&(b_1-b_1)(b_1-b_2)\cdots (b_1-b_s) =0.
\end{eqnarray*}
Hence $(b_1-a_1)(b_1-a_2)\cdots (b_1-a_s) = 0$.  Thus
there is an element in $a_i\in A'$ such that $b_1= a_i$, since all computations are defined in the field $\mathbb{F}$.
This is a contradiction to the fact that $A'$ and $B'$ are disjoint.  Therefore, $\sigma_i(A) \neq \sigma_i(B)$
for some $i \in \{1, \ldots, r\}$, which implies that $f(A) \neq f(B)$.
Hence $f$ gives a proper coloring of $G^2$, and $\chi(G^2)$ is at most the size of the range of $f$.
\end{proof}

The following Theorem is known, which is a generalization of Bertrand-Chebyshev Theorem.

\begin{theorem} \label{prime-2n} {\cite{prime}}
For any positive integer $ n \geq 2$, there  is a prime number $p$ such that $2n < p < 3n$.
\end{theorem}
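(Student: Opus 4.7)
The plan is to mimic Erd\H{o}s's proof of Bertrand's postulate, replacing $\binom{2n}{n}$ by $\binom{3n}{n}$. By Stirling's formula, $\binom{3n}{n} = (1+o(1))\cdot \tfrac{1}{2}\sqrt{3/(\pi n)}\cdot (27/4)^n$, so the target lower bound grows like $(27/4)^n$ with only a polynomial correction. I would then try to show that if no prime lies in $(2n, 3n)$, the remaining prime factorization of $\binom{3n}{n}$ cannot match this growth for sufficiently large $n$, and verify the small cases by direct computation.

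The arithmetic input is the Legendre--Kummer formula $v_p\binom{3n}{n} = \sum_{k \geq 1} \bigl(\lfloor 3n/p^k\rfloor - \lfloor n/p^k\rfloor - \lfloor 2n/p^k\rfloor\bigr)$, each summand lying in $\{0, 1\}$. A case analysis on $p$ then yields: (i)~primes $p \in (2n, 3n]$ contribute exponent exactly $1$; (ii)~primes $p \in (3n/2, 2n]$ contribute exponent $0$ (the direct analogue of Erd\H{o}s's key observation that primes in $(2n/3, n]$ do not divide $\binom{2n}{n}$); (iii)~primes $p \leq \sqrt{3n}$ satisfy $p^{v_p\binom{3n}{n}} \leq 3n$, so their combined contribution is at most $(3n)^{\pi(\sqrt{3n})}$; (iv)~the primes in the remaining intermediate ranges are controlled by Chebyshev's estimate $\prod_{p \leq N} p \leq 4^N$ together with the vanishing identified in (ii). Assuming for contradiction that $(2n, 3n)$ contains no primes, multiplying the bounds gives an upper estimate for $\binom{3n}{n}$ that one wants to show is strictly less than $(27/4)^n$ for $n$ large.

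The main obstacle is that the crude Chebyshev bound $4^{2n} = 16^n$ dwarfs the target $(27/4)^n \approx 6.75^n$, so the intermediate-range analysis has to genuinely exploit the vanishing interval $(3n/2, 2n]$ rather than apply Chebyshev across all primes up to $2n$. A cheaper route is to appeal to Nagura's 1952 theorem, which asserts the existence of a prime in $(n, 6n/5]$ for every $n \geq 25$; this immediately yields a prime in $(2n, 12n/5) \subset (2n, 3n)$ for $n \geq 25$, leaving the cases $2 \leq n \leq 24$ to be checked by inspection (for instance $n=2$ gives $5 \in (4, 6)$, $n=3$ gives $7 \in (6, 9)$, and so on). I expect that a self-contained Erd\H{o}s-style proof is noticeably more delicate here than in the original $(n, 2n]$ case, because the interval $(2n, 3n)$ is only a factor $3/2$ wider than its lower endpoint rather than a factor $2$, leaving much less slack between the lower bound on $\binom{3n}{n}$ and the upper bounds coming from the other prime ranges.
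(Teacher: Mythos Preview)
The paper does not prove this statement at all: Theorem~\ref{prime-2n} is quoted from the literature (El~Bachraoui~\cite{prime}) and used as a black box in the proof of Corollary~\ref{large r}. So there is nothing in the paper to compare your proposal against.

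That said, your sketch is sound. Your first approach, analysing the prime factorisation of $\binom{3n}{n}$ via the Legendre--Kummer formula and exploiting the vanishing on $(3n/2,2n]$, is precisely the strategy of the cited paper~\cite{prime}; your identification of the ``main obstacle'' (that a naive Chebyshev bound $\prod_{p\le 2n}p\le 4^{2n}$ swamps $(27/4)^n$) is exactly why the vanishing interval is essential. Your second route via Nagura's theorem is also valid, and in fact you can sharpen the cutoff: Nagura's result applies once $2n\ge 25$, so it already handles all $n\ge 13$, leaving only $2\le n\le 12$ to check by hand. Either approach is more than the present paper requires, since here the theorem is merely invoked, not reproved.
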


We have the following corollary from Theorem \ref{prime-2n}.

\begin{corollary} \label{large r}
 ${k+r \choose r} + 1 \leq \chi(K^2 (2k+r,k)) \leq (r+2)(3k + \frac{3r+3}{2})^r$.
\end{corollary}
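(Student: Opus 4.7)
The plan is to treat the two bounds independently: the lower bound by exhibiting an explicit clique of size ${k+r \choose r}+1$ in $K^2(2k+r,k)$, and the upper bound by feeding a carefully chosen prime field into Theorem~\ref{general-r}, where the existence of the prime is guaranteed by Theorem~\ref{prime-2n}.

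For the lower bound, I first recall that two distinct $k$-subsets $A,B$ of a ground set $X$ with $|X|=2k+r$ are adjacent in $K^2(2k+r,k)$ if and only if $|A\cap B|=0$ or $|A\cap B|\ge 3k-(2k+r)=k-r$. I would fix any $(k+r)$-subset $Y\subseteq X$ and set $Z=X\setminus Y$, so $|Z|=k$. Any two distinct $k$-subsets $A,B\subseteq Y$ satisfy $|A\cap B|\ge 2k-|Y|=k-r$, so the ${k+r\choose k}={k+r\choose r}$ many $k$-subsets of $Y$ form a clique in $K^2(2k+r,k)$. Since $Z$ is disjoint from every $k$-subset of $Y$, adjoining $Z$ extends the clique to size ${k+r\choose r}+1$, and the desired lower bound follows from $\chi\ge \omega$.

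For the upper bound, I would set $n=k+\lceil r/2\rceil$. Because $2\le r\le k-2$ forces $k\ge 4$, we have $n\ge 2$, so Theorem~\ref{prime-2n} supplies a prime $p$ with $2n<p<3n$. Then $p>2n\ge 2k+r$, so $|\mathbb{F}_p|>2k+r$, and $p>2k+r>r$ ensures the characteristic exceeds $r$; the hypotheses of Theorem~\ref{general-r} therefore hold for $\mathbb{F}=\mathbb{F}_p$. Using $3\lceil r/2\rceil\le (3r+3)/2$ (which handles both parities of $r$), we get $3n\le 3k+(3r+3)/2$, and Theorem~\ref{general-r} yields
\[
\chi(K^2(2k+r,k))\le (r+2)\,p^{r}<(r+2)(3n)^{r}\le (r+2)\Bigl(3k+\tfrac{3r+3}{2}\Bigr)^{r}.
\]

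Neither step should present a genuine obstacle once the earlier results are in hand; the one mildly delicate point is the parity-uniform choice of $n$, which the formula $n=k+\lceil r/2\rceil$ resolves cleanly. As a side remark, fixing a $(k-r)$-subset and taking all $k$-subsets containing it would actually deliver a larger clique of size ${k+2r\choose r}$, but the weaker bound ${k+r\choose r}+1$ already matches the upper bound in asymptotic order and is exactly what is needed to conclude $\chi(K^2(2k+r,k))=\Theta(k^{r})$.
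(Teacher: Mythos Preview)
Your proof is correct and follows essentially the same route as the paper. For the upper bound you make exactly the paper's choice $n=k+\lceil r/2\rceil$ and apply Theorem~\ref{prime-2n} and Theorem~\ref{general-r}; for the lower bound your clique (all $k$-subsets of a fixed $(k+r)$-set $Y$, together with $Z=X\setminus Y$) is precisely the closed neighbourhood in $K(2k+r,k)$ of the vertex $Z$, which is what the paper is invoking when it points to the degree ${k+r\choose r}$.
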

\begin{proof}
From Theorem \ref{prime-2n}, there is a prime $p$ such that $ 2k+r \leq 2(k+ \lceil \frac{r}{2} \rceil) < p < 3(k+ \lceil \frac{r}{2} \rceil) \leq 3k + \frac{3r+3}{2}$.
Thus for every positive integer $k$, there exists a field $\mathbb{F}$ such that $|\mathbb{F}| = p$ for some odd prime $p$ and
$ 2k+r < |\mathbb{F}| < 3k + \frac{3r+3}{2}$.  Thus $\chi(K^2 (2k+r,k)) \leq (r+2)(3k + \frac{3r+3}{2})^r$
from Theorem \ref{general-r}.

On the other hand, the maximum degree of $(K^2 (2k+r,k))$ is  ${k+r \choose r}$.  Thus
${k+r \choose r} +1 \leq  \chi(K^2 (2k+r,k))$.
\end{proof}


\section{Remark}
In Corollary \ref{upper;2k+2}, it was showed that
$\chi(K^2(2k+1,k)) \leq 2k +2$ for infinitely many special cases.
Note  that $\chi(K^2(7,3))=6$ and $\chi(K^2(9,4))=11$.
Hence it is an interesting  to answer the following
question.

\begin{question}
Is it true that  $\chi(K^2(2k+1,k)) \leq 2k +2$ for all  integers $k \geq k_0$ for some fixed integer $k_0$?
\end{question}





\begin{thebibliography}{00}


\bibitem{2009CLW}
{J.-Y. Chen, K.-W. Lih, and J. Wu}:
Coloring the square of the Kneser graph $KG(2k+1,k)$ and the Schrijver
graph $SG(2k+2,2)$,
{\it Discrete Appl. Math.} {\bf 157} (2009), 170--176.

\bibitem{Calamoneri}
T. Calamoneri:
The $L(h,k)$-Labelling problem: An updated survey and annotated bibliography,
{\it Comput. J.} {\bf 54} (2011), 1344--1371.


\bibitem{prime}
M.  El Bachraoui: Primes in the interval $[2n,3n]$,
{\it Int. J. Contemp. Math. Sci.} {\bf 1} (2006),  617--621.

\bibitem{De_Er_PF}  M. Deza, P. Erd\H{o}s, and P. Frankl:
 Intersection properties of the systems of finite sets,
{\it Proc. London Math. Soc.} {\bf 36} (1978), 369--384.








\bibitem{PF-Ota} P. Frankl,   K. Ota, and N.  Tokushige:
 Exponents of uniform L-systems, {\it J. Combin. Theory Ser A.} {\bf 75} (1996),  23--43.





\bibitem{Griggs1992}
J. R. Griggs and R. K. Yeh:
Labelling graphs with a condition at distance 2,
{\it SIAM J. Discrete Math.} {\bf 5} (1992), 586--595.




\bibitem{2004KN}
{S.-J. Kim and K. Nakprasit}:
On the chromatic number of the square of the Kneser graph $K(2k+1,k)$,
{\it Graph. Combinator.} {\bf 20} (2004), 79--90.

\bibitem{2009KL}  A. Khodkar and D. Leach:
The chromatic number of $K^2(9,4)$ is 11,
{\em J. Combin. Math. Combin. Comput.} \textbf{70} (2009), 217–-220.

\bibitem{1978L}
{L. Lov\'{a}sz}:
Kneser's conjecture, chormatic number and homotopy,
{\it J. Comb. Theory, Ser. A} {\bf 25} (1978), 319--324.

\bibitem{Newton2003}
J.  Min\'{a}\v{c}:
Newton's identities once again!,
{\it Amer. Math. Monthly} {\bf 110} (2003),  232--234.



\end{thebibliography}
\end{document}